\newtheorem{theorem}{Theorem}
\begin{document}

\title{The number of group homomorphisms from $D_m$ into $D_n$}
\author{Jeremiah W. Johnson}
\address{Department of Mathematics and Computer Science, Penn State Harrisburg, Middletown PA 17057}
\email{jwj10@psu.edu}

\begin{abstract} We derive general formul\ae\hspace{0.005in} for counting the number of homomorphisms between dihedral groups using only elementary  group theory.
\end{abstract}

\maketitle

This note considers the problem of counting the number of group homomorphisms from $D_m$ into $D_n$, where for a positive integer $l$, $D_l$ denotes the finite group generated by two generators $r_l$ and $f_l$ subject to the relations $r_l^l = e = f_l^2$ and $r_lf_l = f_lr_l^{-1}$. We derive some general formul\ae\hspace{0.005in} using only elementary  group theory and a few basic facts about the dihedral groups. We will assume throughout that $\phi$ represents Euler's totient function. 

\begin{theorem}\label{THM:ODD} Let $m$ and $n$ be positive odd integers. The number of group homomorphisms from $D_m$ into $D_n$ is 
\begin{equation}
	1+n\left(\sum_{k|\gcd(m,n)}\phi(k)\right).
\end{equation}    
\end{theorem}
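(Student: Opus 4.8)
The plan is to exploit the defining presentation of $D_m$: a homomorphism $\psi : D_m \to D_n$ is completely and freely determined by the pair $(R,F) = (\psi(r_m),\psi(f_m))$, subject only to the images satisfying the analogues of the defining relations, namely $R^m = e$, $F^2 = e$, and $RF = FR^{-1}$. So I would first reduce the counting problem to counting pairs $(R,F)$ of elements of $D_n$ meeting these three constraints, recalling that $D_n$ has $n$ rotations $r_n^i$ and $n$ reflections $f_n r_n^j$, and that since $n$ is odd every reflection is an involution.

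Next I would pin down the possibilities for $R$. Since $m$ is odd, a reflection $R$ would satisfy $R^m = R \neq e$, so the relation $R^m = e$ forces $R$ to be a rotation, say $R = r_n^a$. With $R$ a rotation I would split on the nature of $F$. If $F$ is also a rotation, then both images lie in the cyclic rotation subgroup; the relation $RF = FR^{-1}$ reduces to $R^2 = e$ while $F^2 = e$ is imposed directly, and since $n$ is odd both conditions force $R = F = e$, leaving exactly the trivial homomorphism. This accounts for the leading $1$.

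The main content is the case where $F = f_n r_n^b$ is a reflection. Here I would observe the key simplification special to dihedral groups: a direct computation using $r_n^a f_n = f_n r_n^{-a}$ shows that $RF = f_n r_n^{b-a} = FR^{-1}$ holds automatically, and $F^2 = e$ holds automatically because $n$ is odd. Thus the only surviving constraint is $R^m = e$, and every reflection $F$ (there are $n$ of them) is admissible independently of the chosen $R$. It then remains to count the rotations $R = r_n^a$ with $R^m = e$, which I would do by their order: a rotation of order $k$ exists in $\langle r_n\rangle$ precisely when $k \mid n$, there are $\phi(k)$ of them, and the condition $R^m = e$ is equivalent to $k \mid m$. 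Hence the admissible rotations are exactly those whose order divides $\gcd(m,n)$, giving $\sum_{k \mid \gcd(m,n)} \phi(k)$ of them. Multiplying by the $n$ independent choices of reflection and adding the trivial homomorphism yields the claimed total.

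The step I expect to require the most care is the well-definedness reduction in the first paragraph: justifying that every constraint-respecting pair $(R,F)$ genuinely extends to a homomorphism, rather than the relations being merely necessary conditions. This is where the presentation of $D_m$ by generators and relations does the real work. By comparison, the two case computations are routine once the automatic vanishing of the commutation relation for reflections, and the role of $n$ being odd, have been isolated.
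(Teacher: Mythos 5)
Your proposal is correct, and the counting itself matches the paper's decomposition exactly: images of the two generators, the lone trivial homomorphism when the image of $f_m$ is a rotation, and $n$ reflection choices times $\sum_{k\mid\gcd(m,n)}\phi(k)$ rotation choices otherwise. Where you genuinely differ is in how the sufficiency direction is secured. The paper argues by checking orders: it verifies that each proposed assignment sends every element $r_m^k f_m$ to an element whose order divides $|r_m^k f_m|$, and takes this as establishing that the resulting maps are homomorphisms; since order compatibility is only a necessary condition, that step of the paper is informal. You instead invoke the universal property of the presentation $\langle r_m, f_m \mid r_m^m = f_m^2 = e,\ r_m f_m = f_m r_m^{-1}\rangle$ (von Dyck's theorem): any pair $(R,F)$ in $D_n$ satisfying the three relations extends uniquely to a homomorphism. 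This is the right tool, and it makes your version tighter than the paper's on precisely the point you flag as the crux. One nitpick: you twice attribute the fact that reflections are involutions, i.e.\ $F^2 = e$ for $F = f_n r_n^b$, to $n$ being odd; in fact $(f_n r_n^b)^2 = e$ in every dihedral group, so oddness of $n$ plays no role there. Oddness of $n$ is needed only to rule out a rotation of order $2$, which is what forces $R = F = e$ in your rotation--rotation case and what distinguishes this theorem from the even-$n$ case, where $r_n^{n/2}$ becomes an extra admissible image for $f_m$.
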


\begin{proof}
Suppose that $\rho\colon D_m \to D_n$ is a group homomorphism, where $m$ and $n$ are positive odd integers. We consider all of the places that $\rho$ could send the generators $r_m$ and $f_m$ of $D_m$ which yield group homomorphisms. As $m$ is odd, it must be the case that $\rho(r_m) = r_n^{\alpha}$, where $r_n^{\alpha}$ is an element of $D_n$ whose order divides both $m$ and $n$. Let $k$ represent the order of this element. There are precisely $\phi(k)$ elements of order $k$ in $D_n$. Since $\rho$ can send $r_m$ to any one of these elements, we have $\sum_{k | m, n} \phi(k)$ choices for $\rho(r_m)$.

Next, consider our choices for $\rho(f_m)$. Since $|\rho(f_m)|$ divides $|f_m| = 2$, either $\rho(f_m) = r_n^{\beta}f_n$, $0 \leq \beta < n$, or $\rho(f_m) = e_n$. But not all of these choices for $\rho(f_m)$ yield homomorphisms, as can be seen when we consider where $\rho$ sends the remaining elements in $D_m$ of the form $r_m^kf_m$, where $0 < k < m$. If $\rho(f_m) = e_n$ and $\rho(r_m) = r_n^{\alpha}$, where $\alpha \neq 0$ or $n$, then $\rho(r_mf_m) = r_n^{\alpha}e_n = r_n^{\alpha}$, and $|r_n^{\alpha}|$ does not divide $|r_mf_m|$. Therefore, if $\rho(f_m) = e_n$, then $\rho$ must be trivial. Conversely, when $\rho(f_m) = r_n^{\beta}f_n$, $\rho(r_m^kf_m) = r_n^{k\alpha +\beta \mod n}f_n$, and $|r_n^{k\alpha +\beta \mod n}f_n|$ divides $|r_m^kf_m|$. So, given any choice for $r_m$, we have $n$ choices for $f_m$. Including the trivial homomorphism gives the result.    
\end{proof}

When $m$ and $n$ are positive odd integers and $m|n$, it follows from the fact that $\sum_{k|n}\phi(k) = n$ \cite{WS88} that there are $mn+1$ group homomorphisms from $D_m$ into $D_n$, and furthermore, there are  $n^2+1$ group endomorphisms of $D_n$.

When $m$ is a positive odd integer and $n$ is a positive even integer, $r_n^{n/2}$ is a possible choice for the image of $f_m$. However, if $f_m$ is sent to $r_n^{n/2}$, then the image of $r_m$ must be $e_n$; otherwise the map fails to be a homomorphism. Again let $\rho\colon D_m\to D_n$ denote the map and suppose that $\rho(r_mf_m) = r_n^{\alpha}r_n^{n/2}$ for some $\alpha \neq 0$ or $n$ This element necessarily has order not equal to 2 or 1; a contradiction. So in this case, we gain a single additional map sending $r_m$ to $e_n$ and $f_m$ to $r_n^{n/2}$. Taking this additional consideration into account, a proof nearly identical to that used for Theorem \ref{THM:ODD} yields the following result. 

\begin{theorem}\label{THM:ODDEVEN} Let $m$ be a positive odd integer and $n$ a positive even integer. The number of group homomorphisms from $D_m$ into $D_n$ is 
\begin{equation} 2+n\left(\sum_{k|\gcd(m, n)} \phi(k)\right).
\end{equation}
\end{theorem}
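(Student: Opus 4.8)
The plan is to run the proof of Theorem~\ref{THM:ODD} essentially verbatim, inserting one extra case to account for the order-two rotation $r_n^{n/2}$ that now exists because $n$ is even. Fix a homomorphism $\rho\colon D_m\to D_n$. First I would pin down the admissible images of $r_m$ exactly as before: since $m$ is odd, $|\rho(r_m)|$ divides $m$ and is therefore odd, so $\rho(r_m)$ cannot be a reflection (every reflection has order $2$), forcing $\rho(r_m)=r_n^\alpha$ with order dividing $\gcd(m,n)$. Grouping these rotations by their order $k$ and using that $D_n$ contains $\phi(k)$ elements of order $k$ gives $\sum_{k\mid\gcd(m,n)}\phi(k)$ choices for $\rho(r_m)$, unchanged from the odd case.

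Next I would enumerate the images of $f_m$. Because $|f_m|=2$, the element $\rho(f_m)$ has order dividing $2$, so it lies in exactly one of three disjoint families: $\rho(f_m)=e_n$; $\rho(f_m)=r_n^{\beta}f_n$ with $0\le\beta<n$; or the genuinely new possibility $\rho(f_m)=r_n^{n/2}$. For the reflection family the verification is identical to Theorem~\ref{THM:ODD}: each of the $n$ reflections is compatible with every admissible $\rho(r_m)$, contributing $n\sum_{k\mid\gcd(m,n)}\phi(k)$ homomorphisms. For $\rho(f_m)=e_n$ the same argument as before forces $\rho$ to be trivial, contributing the single trivial homomorphism.

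The only new work, and the step I expect to be the main obstacle, is the third family, which is precisely the case isolated in the paragraph preceding the statement. Here I would show that $\rho(f_m)=r_n^{n/2}$ forces $\rho(r_m)=e_n$. Writing $\rho(r_m)=r_n^{\alpha}$, the defining relation $r_mf_m=f_mr_m^{-1}$ yields $r_n^{\alpha}r_n^{n/2}=r_n^{n/2}r_n^{-\alpha}$, i.e. $2\alpha\equiv 0 \pmod{n}$, so $\alpha\equiv 0$ or $n/2$; but $\alpha=n/2$ would give $\rho(r_m)$ order $2$, impossible since its order divides the odd number $m$. (Equivalently, both images would be rotations, hence $\rho(r_m^kf_m)=r_n^{k\alpha+n/2}$ would fail to have order dividing $2$ unless $\alpha=0$.) Thus this family supplies exactly one extra homomorphism, namely $r_m\mapsto e_n,\ f_m\mapsto r_n^{n/2}$.

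Finally I would add the three contributions. The subtlety to check is that there is no double counting: $r_n^{n/2}$ is neither the identity nor a reflection, so the three families of images for $\rho(f_m)$ are genuinely disjoint, and the new map coincides with neither the trivial homomorphism nor any reflection map. Summing $n\sum_{k\mid\gcd(m,n)}\phi(k)$ from the reflections, $1$ from the trivial map, and $1$ from the new map gives $2+n\sum_{k\mid\gcd(m,n)}\phi(k)$, as claimed.
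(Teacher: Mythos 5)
Your proposal is correct and follows essentially the same route as the paper: it reruns the proof of Theorem~\ref{THM:ODD} (rotation images for $r_m$, the $n$ reflections for $f_m$, plus the trivial map) and isolates the one new homomorphism $r_m\mapsto e_n$, $f_m\mapsto r_n^{n/2}$, which is exactly how the paper handles the even-$n$ case in the paragraph preceding the theorem. Your forcing argument via the relation $2\alpha\equiv 0\pmod{n}$ is a slight (and clean) variant of the paper's order-of-$\rho(r_mf_m)$ argument, but the two are equivalent, as you yourself note.
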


When $m$ is a positive even integer, the number of choices that exist for the image of $r_m$ includes all elements of the form $r_n^kf_n$, $0 \leq k < n$. This creates a number of additional possibilities.  

\begin{theorem}\label{THM:EVEN} Let $m$ and $n$ be positive even integers. The number of group homomorphisms from $D_m$ into $D_n$ is 
\begin{equation}
	4+4n+n\left(\sum_{k|\gcd(m,n)}\phi(k)\right).
\end{equation}
\end{theorem}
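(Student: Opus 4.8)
The plan is to count the homomorphisms $\rho\colon D_m \to D_n$ by counting admissible pairs $(a,b) = (\rho(r_m),\rho(f_m))$ in $D_n$, where a pair determines a homomorphism exactly when $a^m = e_n$, $b^2 = e_n$, and $ab = ba^{-1}$. The new phenomenon when $m$ is even, flagged in the remark preceding the statement, is that $\rho(r_m)$ may now be a reflection $r_n^{\gamma}f_n$, since every reflection has order $2$ and $2 \mid m$. Accordingly, I would organize the count by whether $a$ is a rotation or a reflection, and within each case by whether $b$ is a rotation or a reflection. These four categories are disjoint, as no element of $D_n$ is simultaneously a rotation and a reflection, so no pair is counted twice.

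When $a = r_n^{\alpha}$ is a rotation, the argument runs parallel to Theorems \ref{THM:ODD} and \ref{THM:ODDEVEN}. The condition $a^m = e_n$ forces $|a|$ to divide both $m$ and $n$, hence $\gcd(m,n)$, giving $\sum_{k \mid \gcd(m,n)} \phi(k)$ choices for $a$. For any reflection $b = r_n^{\beta}f_n$ one computes $ab = r_n^{\alpha+\beta}f_n = ba^{-1}$ and $b^2 = e_n$, so all $n$ reflections are admissible, contributing $n\sum_{k \mid \gcd(m,n)}\phi(k)$. If instead $b$ is a rotation, then $ab = ba^{-1}$ collapses (rotations commute) to $a = a^{-1}$, i.e.\ $a^2 = e_n$, while $b^2 = e_n$ forces $b \in \{e_n, r_n^{n/2}\}$; since $n$ is even, $a$ likewise ranges over $\{e_n, r_n^{n/2}\}$, both of which are admissible images because $2 \mid \gcd(m,n)$. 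This yields $2 \times 2 = 4$ further pairs, producing the constant term.

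When $a = r_n^{\gamma}f_n$ is a reflection, which is the genuinely new case, the condition $a^m = e_n$ is automatic, so all $n$ reflections are admissible images of $r_m$. Since $a^{-1} = a$, the relation $ab = ba^{-1}$ becomes $ab = ba$, so $b$ must lie in the centralizer of $a$ in $D_n$. Here I would invoke that, for $n$ even, the centralizer of a reflection has order $4$, namely $\{e_n, r_n^{n/2}, a, r_n^{n/2}a\}$, and that every one of these four elements has order dividing $2$, so the constraint $b^2 = e_n$ is met automatically. Thus each of the $n$ reflection images of $r_m$ admits exactly $4$ choices for $b$, contributing $4n$. Summing the three contributions gives $n\sum_{k\mid\gcd(m,n)}\phi(k) + 4 + 4n$, as claimed.

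The main obstacle I anticipate is the reflection case for $a$: one must verify cleanly that $ab = ba^{-1}$ reduces to a commuting condition and that exactly four values of $b$ survive. I would pin this down either through the explicit centralizer $\{e_n, r_n^{n/2}, a, r_n^{n/2}a\}$, or equivalently by testing $b = r_n^{\delta}f_n$ and $b = r_n^{\delta}$ against $ab = ba$ directly, which reduces to $r_n^{2(\gamma-\delta)} = e_n$ for reflections (two solutions) and to $\delta \in \{0, n/2\}$ for rotations (two solutions). A secondary point worth stating is that $2 \mid \gcd(m,n)$ whenever $m$ and $n$ are both even, which is precisely what legitimizes the use of $r_n^{n/2}$ as an image and includes its order, $2$, within the sum $\sum_{k \mid \gcd(m,n)}\phi(k)$.
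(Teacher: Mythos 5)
Your proposal is correct, and while it arrives at the same three-way decomposition as the paper (rotation image of $r_m$ with reflection image of $f_m$, rotation--rotation, and reflection image of $r_m$), the machinery you use to certify each count is genuinely different. The paper verifies candidate maps by chasing orders of images of elements $r_m^k f_m$ and, in the reflection case, by arguing that the image must be one of exactly two subgroups containing $r_n^{\alpha}f_n$ (the order-$2$ cyclic subgroup and a unique Klein four subgroup), then counting generators completing each. You instead invoke the presentation $\langle r, f \mid r^m = f^2 = e,\ rf = fr^{-1}\rangle$ so that a pair $(a,b)$ yields a homomorphism if and only if $a^m = e_n$, $b^2 = e_n$, $ab = ba^{-1}$, and in the new case $a = r_n^{\gamma}f_n$ you reduce everything to computing the centralizer $\{e_n, r_n^{n/2}, a, r_n^{n/2}a\}$ of a reflection when $n$ is even. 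These identify the same four choices of $b$, but your route buys a uniform, fully checkable criterion: the paper's ``a brief calculation shows'' and its claim that the image has order $2$ or $4$ (which secretly uses the relation $ab=ba^{-1}$ to force $a$ and $b$ to commute, since two involutions in general generate a possibly large dihedral group) are made explicit and automatic in your framework. The paper's route, in exchange, exposes the structure of the possible images (cyclic and Klein four subgroups), which is pleasant but not needed for the count. One small point you handle well that the paper leaves implicit: the admissibility of $r_n^{n/2}$ as an image rests precisely on $2 \mid \gcd(m,n)$, which is where the hypothesis that both $m$ and $n$ are even enters the constant terms $4$ and $4n$.
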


\begin{proof} Suppose that $\rho\colon D_m \to D_n$ is a group homomorphism, where $m$ and $n$ are positive even integers. When $m$ is even, we have in addition to the $\sum_{k | m, n} \phi(k)$ possible choices for $\rho(r_m)$ that occur when $m$ is odd the possibility of mapping $r_m$ to those elements in $D_n$ of the form $r_n^{\beta}f_n$. As there are $n$ such elements of the latter type, we have $\sum_{k | m, n} \phi(k) + n$ possible choices for $\rho(r_m)$.

Next, suppose $\rho(r_m) = r_n^{\alpha}$ and consider $\rho(f_m)$. Since $|\rho(f_m)|$ divides $|f_m| = 2$, it must be the case that either $\rho(f_m) = r_n^{\beta}f_n$, $0 \leq \beta < n$, $\rho(f_m) = r_n^{n/2}$, or $\rho(f_m) = e_n$. If $\alpha = 0$ or $n/2$, any of these $n+2$ choices for $\rho(f_m)$ will yield a homomorphism. If $\alpha \neq 0$ or $n/2$, then $\rho(f_m)$ cannot equal $e_n$ or $r_n^{n/2}$. So, there are $n\left(\sum_{k|\gcd(m,n)}\phi(k)\right)+4$ homomorphisms sending $r_m$ to an element of the form $r_n^{\alpha}$.

Assume next that $\rho(r_m) = r_n^{\alpha}f_n$. Since $|\rho(r_m)| = |\rho(f_m)| = 2$, it follows that if $\rho$ is a homomorphism, then the size of the image of $\rho$ is either 2 or 4. There is only one subgroup of each order containing $r_n^{\alpha}f_m$; the cyclic subgroup $\langle r_n^{\alpha}f_m\rangle$, and the subgroup $\langle r_n^{\alpha}f_m, r_n^{\alpha+n/2 \mod n}f_n\rangle$. There are two choices for $f_m$ which result in the first case; namely, $\rho(f_m) = e_n$, or $\rho(f_m) = r_n^{\alpha}$. Similarly, there are two choices for $f_m$ which result in the second case; $\rho(f_m) = r_n^{\alpha+n/2 \mod n}f_n$ or $\rho(f_m) = r^{n/2}$. A brief calculation shows that each of these four possibilities does in fact give a homomorphism, which leads to the conclusion.
\end{proof}

When $m$ and $n$ are positive even integers and $m|n$, it follows that the number of group homomorphisms from $D_m$ into $D_n$ is $4+4n+mn$, while the number of group endomorphisms of $D_n$ is $(n+2)^2$. 

The last case to consider is when $m$ is even and $n$ is odd.

\begin{theorem}\label{THM:EVENODD} Let $m$ be a positive even integer and $n$ a positive odd integer. The number of group homomorphisms from $D_m$ into $D_n$ is 
\begin{equation} 1 + 2n + n\left(\sum_{k|\gcd(m, n)}\phi(k)\right).
\end{equation}
\end{theorem}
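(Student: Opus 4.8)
The plan is to mirror the proof of Theorem~\ref{THM:ODD}, splitting the count according to whether $\rho(r_m)$ is a rotation or a reflection, since with $m$ even both types are now available as images of $r_m$. First I would record which elements can receive $r_m$: the image must have order dividing $m$, and because $n$ is odd the elements of $D_n$ of order dividing $m$ are exactly the rotations of order dividing $\gcd(m,n)$ together with all $n$ reflections $r_n^{\beta}f_n$ (each an involution, and $2 \mid m$). I expect the rotation case to reproduce the Theorem~\ref{THM:ODD} count and the reflection case to supply an extra $2n$.

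In the rotation case $\rho(r_m) = r_n^{\alpha}$, the dihedral relation forces $r_n^{\alpha}\rho(f_m) = \rho(f_m)r_n^{-\alpha}$. A direct check shows that every reflection is an admissible image of $f_m$ for every $\alpha$, whereas $\rho(f_m) = e_n$ is admissible only when $r_n^{2\alpha} = e_n$, which (as $n$ is odd) forces $\alpha \equiv 0 \pmod n$. This is precisely the situation analyzed in Theorem~\ref{THM:ODD}: for each of the $\sum_{k \mid \gcd(m,n)}\phi(k) = \gcd(m,n)$ rotation images of $r_m$ there are $n$ reflection images of $f_m$, and adjoining the trivial homomorphism gives $1 + n\sum_{k \mid \gcd(m,n)}\phi(k)$ homomorphisms of this type.

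The genuinely new contribution, and the step I expect to be the crux, is the reflection case $\rho(r_m) = r_n^{\alpha}f_n$. Since this image is an involution we have $\rho(r_m)^{-1} = \rho(r_m)$, so the dihedral relation collapses to the single requirement that $\rho(f_m)$ commute with $r_n^{\alpha}f_n$. The key computation is therefore to determine which elements of order dividing $2$ centralize a given reflection, and this is exactly where the parity of $n$ tells: unlike in Theorem~\ref{THM:EVEN}, there is no central rotation $r_n^{n/2}$, and two distinct reflections never commute, since $r_n^{a}f_n$ and $r_n^{b}f_n$ commute only when $n \mid 2(a-b)$, i.e. (as $n$ is odd) when $a \equiv b \pmod n$. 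Hence the only admissible images of $f_m$ are $e_n$ and $r_n^{\alpha}f_n$ itself, giving exactly $2$ homomorphisms for each of the $n$ reflections, and $2n$ in all.

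Finally I would confirm that each candidate assignment really respects all three relations $\rho(r_m)^m = e_n$, $\rho(f_m)^2 = e_n$, and the commutation relation, which is routine once the centralizer has been pinned down, and then add the two cases to obtain $1 + 2n + n\sum_{k \mid \gcd(m,n)}\phi(k)$. The main obstacle is thus not the bookkeeping but the clean identification of the involutions centralizing a reflection in the odd group $D_n$; the absence of the central rotation $r_n^{n/2}$ is what reduces the four maps of the even-even case to just two here, and everything else parallels the earlier theorems.
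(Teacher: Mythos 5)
Your proposal is correct and follows essentially the same route as the paper: split according to whether $\rho(r_m)$ is a rotation or a reflection, recover the Theorem~\ref{THM:ODD} count of $1+n\left(\sum_{k|\gcd(m,n)}\phi(k)\right)$ in the rotation case, and show that each of the $n$ reflections admits exactly two choices for $\rho(f_m)$, giving the extra $2n$. The only difference is how the reflection case is justified: you compute directly from the defining relation which involutions centralize $r_n^{\alpha}f_n$ (using that $n$ odd rules out a central rotation and commuting distinct reflections), whereas the paper argues that the image must be the order-$2$ subgroup $\langle \rho(r_m)\rangle$; both are sound, and your verification via the presentation of $D_m$ is, if anything, slightly more explicit than the paper's.
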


\begin{proof} As in the proof of Theorem \ref{THM:ODD}, there are $n\left(\sum_{k|\gcd(m,n)}\phi(k)\right)$ homomorphisms in which $r_m$ is sent to an element of the form $r_n^{\alpha}$, $0 < \alpha < n$, plus the trivial homomorphism. In addition, we could send $r_m$ to any of the $n$ elements of the form $r_n^{\alpha}f_n$, $0 \leq \alpha < n$. If $\rho(r_m) = r_n^{\alpha}f_n$, then the image of $\rho$ is a subgroup of order 2, the cyclic subgroup $\langle \rho(r_m)\rangle$. That leaves two choices for $\rho(f_m)$; either $\rho(f_m) = e_n$ or $\rho(f_m) = r_n^{\alpha}f_n$, from which the result follows.   
\end{proof}

When $\gcd(m,n) = 1$, Theorems \ref{THM:ODDEVEN} and \ref{THM:EVENODD} lead to the succinct formul\ae\hspace{0.005in} that the number of group homomorphisms from $D_m$ into $D_n$ equals $n+2$ when $m$ is odd and $n$ is even, and $3n+1$ when $m$ is even and $n$ is odd.


\begin{thebibliography}{99}

\bibitem[1]{WS88} W. Sierpinski,
	\emph{Elementary Theory of Numbers}, 
	2nd ed., North-Holland, Amsterdam.

\end{thebibliography}
\end{document}